\begin{document}

\markboth{D.A. Fedoseev, V.O. Manturov, Z. Cheng}
{On marked braid groups}

\catchline{}{}{}{}{}

\title{ON MARKED BRAID GROUPS}

\author{DENIS A. FEDOSEEV}

\address{Moscow State University, Chair of Differential Geometry and Applications}

\author{VASSILY O. MANTUROV}

\address{Bauman Moscow State Technical University, Chair FN--12}

\author{ZHIYUN CHENG}

\address{School of Mathematical Sciences, Beijing Normal University
\\Laboratory of Mathematics and Complex Systems, Ministry of
Education, Beijing 100875, China}
\maketitle

\begin{abstract}
In the present paper, we introduce $\mathbb{Z}_2$-braids and, more generally, $G$-braids for an arbitrary group $G$.
They form a natural group-theoretic counterpart of $G$-knots, see \cite{reidmoves}. The underlying idea, used in the construction of these objects --- decoration of crossings with some additional information --- generalizes an important notion of {\it parity} introduced by the second author (see \cite{parity}) to different combinatorically--geometric theories, such as knot theory, braid theory and others.
These objects act as natural enhancements of classical (Artin) braid groups.

The notion of dotted braid group is introduced:
classical (Artin) braid groups live inside dotted braid groups as those elements having presentation with no dots
on the strands.

The paper is concluded by a list of unsolved problems.
\end{abstract}

\keywords{Braid, virtual braid, group, presentation, parity.}

\ccode{Mathematics Subject Classification 2000: 57M25, 57M27}

\section{Introduction}	
In the present paper we introduce the notion of $\mathbb{Z}_2-$braids as well as their generalization: groups of $G-$braids for any arbitrary group $G$. They form a natural group--theoretical analog of $G-$knots, first introduced by the second author in \cite{reidmoves}. Those objects naturally generalize the classical (Artin) braids. $G-$braids have a natural structure: every classical crossing of a braid is decorated with an element of $G$.

We also give another presentation of the group of $\mathbb{Z}_2-$braids (and even of a broader group) where the information about the group elements is contained in the braid strands. That leads one to the notion of dotted braids: classical (Artin) braids live inside the dotted braids group, being braids with a diagram with no dots on the strands. A similar object is defined: twisted dotted braids.

In theories dealing with additional structures on objects' diagrams (say, parity, dots, group structures) an important role is played by the theorems of the following form: {\it if two objects are equivalent as objects with an additional structure (hence with a broader set of moves allowed) then they are equivalent as objects without this structure}. An example of this kind of theorems is given by the theorem on classical braids being equivalent as virtual yielding their equivalence as classical (see, for example, \cite{braids}).

There are two theorems of that type in the present paper. The first one deals with classical braids being equivalent as $\mathbb{Z}_2-$braids and the second --- with $\mathbb{Z}_2-$braids equivalent as dotted braids. In both cases the equivalence can be ``descended'' to a smaller class of braids with a smaller set of allowed moves.

Virtual braids as well as virtual knots possess many nice properties which lead to powerful invariants (see \cite{bracket}).
As it has been recently found (see \cite{nonreid}),
picture--valued invariants appear in the case of classical braids by using groups $G_{n}^{3}$.

Thus, a natural question arises: are there any models representing virtual braid groups inside classical
braid groups? Certainly, if we do not admit further modifications, the answer will be negative
since classical braid groups have no torsion.

The present paper is a first step of understanding the above mentioned question by introducing
the idea of parity into the study of classical braids.

A list of unsolved problems is given in the present paper.

\section{Basic definitions}
First let us recall the definition of a \textit{$n-$stand braid}. There exists several (equivalent) definitions; we will formulate the one we will be using in the paper.

\begin{definition}
The $n-$strand braids group is a group given by $(n-1)$ generators $\sigma_1, \dots, \sigma_{n-1}$ and the following set of relations:
\begin{equation}\label{far_comm}
\sigma_i \sigma_j=\sigma_j\sigma_i
\end{equation}
for all $|i-j|\ge 2$ ({\rm far commutativity}) and
\begin{equation}\label{raid3}
\sigma_i \sigma_{i+1}\sigma_i=\sigma_{i+1}\sigma_i\sigma_{i+1}
\end{equation}
for $1\le i\le n-2$.
\end{definition}
This set of relations is called the \textit{Artin relations}.

In other words, the braid group elements are the equivalence classes of words in the alphabet $\sigma_i, \sigma_i^{-1}$ modulo the above--mentioned relations. Such words are also called \textit{braid--words}.

The definition of braids via the Artin presentation can easily be interpreted geometrically. To this end the notion of a \textit{flat diagram} of a braid is introduced.

\begin{definition}
{\rm Flat diagram of a $n-$strand braid} is a graph inside a rectangle $\mathbb{R}\times[0,1]$ with the following additional structure and properties:
\begin{enumerate}
\item The graph vertices of valency 1 are the points $[i,0]$ and $[i,1]$, $i=1,\dots, n$.
\item All other vertices are of valency four. The opposite edges in such vertices are at the angle of 180 degrees.
\item The braid strands monotonously go down with respect to the coordinate $t$.
\item Every four--valent vertex is decorated with a ``overcrossing--undercrossing'' structure.
\end{enumerate}
\end{definition}

Two diagrams are considered equal if they can be connected by a chain of flat isotopies (with the condition of the strands always being monotonously going down), second and third Reidemeister moves.
\begin{figure}
\begin{center}
\includegraphics[scale=0.7]{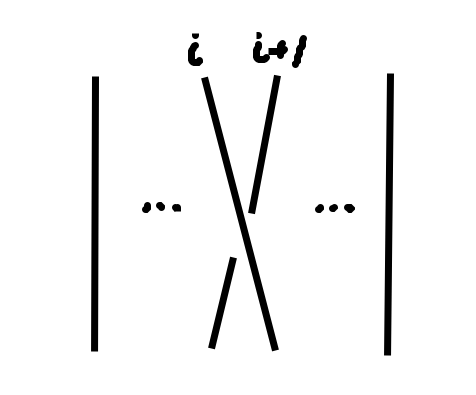}
\caption{The generator $\sigma_i$.}
\label{generators}
\end{center}
\end{figure}
Now, for the generators $\sigma_i$ of the braid group take the braids of the following form: $\sigma_i$ for $i=1, \dots, n-1$ consists of $n-2$ intervals connecting the points $[k,1]$ with $[k,0]$ for all $k\neq i,k \neq i+1$ and two intervals $[i,0]-[i+1,1],\, [i+1,0]-[i,1]$ such that the latter goes over the former (see Fig. \ref{generators}). For the inverse element $\sigma_i^{-1}$ we take the same braid but with the crossing structure in the only crossing reversed (see Fig. \ref{inverse}). It can be easily seen that the existence of the inverse is exactly the invariance under the second Reidemeister move, the relation (\ref{raid3}) --- the third Reidemeister move, and the far commutativity can be understood as the commutativity of the crossings with no mutual strands.
\begin{figure}[h!]
\begin{center}
\includegraphics[scale=0.7]{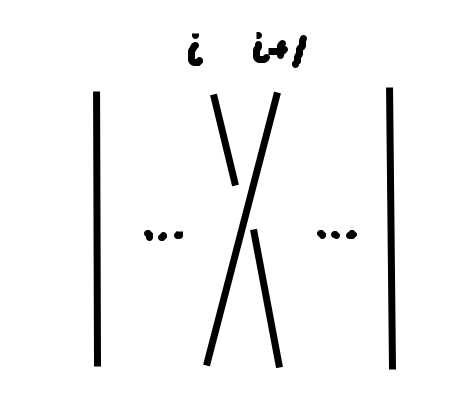}
\caption{The inverse to a generator $\sigma_i$.}
\label{inverse}
\end{center}
\end{figure}
\section{$\mathbb{Z}_2-$braids}

Now let us define a new object: $\mathbb{Z}_2-$briads or \textit{braids with parity} (see \cite{reidmoves}). From now on we will use the algebraic definition but keep in mind that there is always an underlying geometric object, probably with an additional structure.

\begin{definition}[$\mathbb{Z}_2-$braid group]
The $n-$strand $\mathbb{Z}_2-$braid group is a group with generators $\sigma_{i,0}, \sigma_{i,1},$ $i=1,\dots, n-1$ and the following set of relations:
\begin{equation}\label{far_comm_z}
\sigma_{i,\varepsilon} \sigma_{j,\eta}=\sigma_{j,\eta}\sigma_{i,\varepsilon}
\end{equation}
for all $|i-j|\ge 2$ and all $\varepsilon, \eta \in \{0,1\}$ ({\rm far commutativity}) and
\begin{equation}\label{raid3_z}
\sigma_{i,\varepsilon} \sigma_{i+1,\eta}\sigma_{i,\xi}=\sigma_{i+1,\xi}\sigma_{i,\eta}\sigma_{i+1,\varepsilon}
\end{equation}
where $1\le i\le n-2$ and $\varepsilon+\eta+\xi\equiv 0 \mod 2$.
\end{definition}

We will call the $\sigma_{i,0}$ generators \textit{even} and the $\sigma_{i,1}$ ones --- \textit{odd}. The $n-$strand $\mathbb{Z}_2-$braids group will be denoted by $Br_2^n$.

From the geometric point of view this definition means an additional structure (``parity'') being introduced in the braid crossings. The parity is such that every two crossings in a second Reidemeister move are both even or both odd and among the three crossing in the third move either two or zero are odd.

When working with objects with additional structure which are related to objects with no such structure one should keep in mind the following important principle: {\it if two objects are equivalent as objects with additional structure (and thus with a broader set of moves and equivalences allowed) they are equivalent as objects without the structure (therefore connected with a chain of moves from a smaller set)}. In case of classical and $\mathbb{Z}_2-$braids this principle is true. In other words, the following theorem holds:

\begin{theorem} \label{classic_in_2}
If two braids without odd crossings are equivalent as $\mathbb{Z}_2-$braids, they are equivalent in the class of the braids with no odd crossings ({\rm even braids}).
\end{theorem}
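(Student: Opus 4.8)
The plan is to realize the class of even braids as a retract of the full $\mathbb{Z}_2$-braid group $Br_2^n$, so that the natural embedding of the classical braid group cannot glue together distinct even braids. Denote by $Br^n$ the classical (Artin) braid group of the first definition, and let $\iota\colon Br^n\to Br_2^n$ be the map sending $\sigma_i\mapsto\sigma_{i,0}$. Putting $\varepsilon=\eta=\xi=0$ in the defining relations (\ref{far_comm_z}) and (\ref{raid3_z}) turns them precisely into the Artin relations (\ref{far_comm}) and (\ref{raid3}), so $\iota$ is a well-defined homomorphism whose image is exactly the subgroup of even braids. In these terms the theorem is equivalent to the assertion that $\iota$ is injective: if two even braids coincide in $Br_2^n$, their injectivity forces them to coincide in $Br^n$, i.e. to be connected by a chain of even moves alone.

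To establish injectivity I would exhibit an explicit left inverse, the \emph{parity-forgetting} homomorphism $\phi\colon Br_2^n\to Br^n$ given on generators by $\phi(\sigma_{i,\varepsilon})=\sigma_i$ for both $\varepsilon\in\{0,1\}$. The essential (and essentially only) thing to verify is that $\phi$ respects the defining relations of $Br_2^n$. Applying $\phi$ to (\ref{far_comm_z}) yields $\sigma_i\sigma_j=\sigma_j\sigma_i$ for $|i-j|\ge 2$, which is (\ref{far_comm}); applying $\phi$ to (\ref{raid3_z}) yields $\sigma_i\sigma_{i+1}\sigma_i=\sigma_{i+1}\sigma_i\sigma_{i+1}$, which is (\ref{raid3}). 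Note that the parity constraint $\varepsilon+\eta+\xi\equiv 0\pmod 2$ becomes irrelevant once the second index is dropped: every relation that actually occurs descends to a genuine Artin relation, so $\phi$ is well defined.

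Finally, on generators $\phi(\iota(\sigma_i))=\phi(\sigma_{i,0})=\sigma_i$, hence $\phi\circ\iota=\mathrm{id}_{Br^n}$. A homomorphism admitting a left inverse is injective, so $\iota$ is injective and the theorem follows. Concretely, if two even braids are represented by words $w_1,w_2$ in the letters $\sigma_{i,0}^{\pm1}$ that denote the same element of $Br_2^n$, then applying $\phi$ shows that the corresponding words in $\sigma_i^{\pm1}$ are equal in $Br^n$; thus the two braids are already equivalent using only the Artin relations (\ref{far_comm})--(\ref{raid3}).

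I expect the single point requiring care to be the well-definedness of $\phi$, namely the observation that deleting parities never produces a relation lying outside the Artin presentation. Because both families of relations of $Br_2^n$ are closed under this deletion, the verification is immediate, and it is precisely this closure that makes the parity-forgetting retraction the natural and decisive tool. No torsion- or ordering-based arguments about $Br^n$ are needed; the result reduces to the formal existence of the retraction.
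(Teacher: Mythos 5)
Your argument is correct, and it is genuinely different from the one in the paper. You prove injectivity of $\iota\colon Br^n\to Br_2^n$ by exhibiting the parity--forgetting retraction $\phi(\sigma_{i,\varepsilon})=\sigma_i$; the only thing to check is that the defining relations (\ref{far_comm_z}) and (\ref{raid3_z}) are closed under deleting the second index, which they are, since both collapse to the Artin relations (\ref{far_comm}) and (\ref{raid3}). This is entirely self-contained and is, in effect, the ``projection'' operation of parity theory specialized to braids. The paper instead composes with the virtualization homomorphism of Lemma \ref{virt}, sending $\sigma_{i,0}\mapsto\sigma_i$ and $\sigma_{i,1}\mapsto\zeta_i$, and then invokes the external (and non-trivial) theorem of \cite{braids} that two classical braids equal in $Br_v^n$ are equal in $Br^n$. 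What your route buys is economy: no appeal to virtual braid theory at all. What the paper's route buys is robustness and context: the map to $Br_v^n$ is of independent interest, and the virtualization strategy survives in situations where naive forgetting fails --- for instance in the quotient $Br_2^n/\langle\sigma_{i,1}=\sigma_{i,1}^{-1}\rangle$ considered later in the paper, where $\sigma_{i,1}^2=e$ would project under $\phi$ to the false relation $\sigma_i^2=e$ in $Br^n$, yet maps consistently to $\zeta_i^2=e$ in the virtual braid group. One small point of exposition: you should state explicitly (as you do implicitly in your final paragraph) that ``equivalent in the class of even braids'' means connected by a chain of relations involving only even generators, and that these relations are exactly the Artin ones, so that the theorem really is the injectivity of $\iota$; with that observation in place your proof is complete.
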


To prove this fact we need a lemma relating $\mathbb{Z}_2-$braids with virtual braids. Let us recall the definition of virtual braids:

\begin{definition}
The $n-$strand virtual braids group is a group with $(n-1)$ generators $\sigma_1, \dots, \sigma_{n-1}$ and $(n-1)$ generators $\zeta_1, \dots, \zeta_{n-1}$ and the following set of relations:
\begin{equation}\label{far_comm_v1}
\sigma_i \sigma_j=\sigma_j\sigma_i
\end{equation}
for all $|i-j|\ge 2$ ({\rm far commutativity}) and
\begin{equation}\label{raid3_v1}
\sigma_i \sigma_{i+1}\sigma_i=\sigma_{i+1}\sigma_i\sigma_{i+1}
\end{equation}
for $1\le i\le n-2$;
\begin{equation}\label{far_comm_v2}
\zeta_i \zeta_j=\zeta_j\zeta_i
\end{equation}
for all $|i-j|\ge 2$ ({\rm far virtual commutativity}) and
\begin{equation}\label{raid3_v2}
\zeta_i \zeta_{i+1}\zeta_i=\zeta_{i+1}\zeta_i\zeta_{i+1}
\end{equation}
for $1\le i\le n-2$,
\begin{equation}
\zeta_i^2=e;
\end{equation}
\begin{equation}
\sigma_i\zeta_{i+1}\zeta_i=\zeta_{i+1}\zeta_i\sigma_{i+1};
\end{equation}
\begin{equation}
\sigma_i\zeta_j=\zeta_j\sigma_i
\end{equation}
for all $|i-j|\ge 2$.
\end{definition}

The virtual braid group will be denoted with $Br_v^n$.

\begin{lemma} \label{virt}
There exists a homomorphism from the the group $Br_2^n$ into the virtual braid group $Br_v^n$.
\end{lemma}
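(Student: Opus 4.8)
The plan is to define $\phi$ on the generators of $Br_2^n$ and then check that every defining relation of $Br_2^n$ is carried to a valid identity in $Br_v^n$; by the universal property of a group given by generators and relations (von Dyck's theorem), this is enough to extend $\phi$ to a homomorphism. The natural candidate, dictated by the geometric slogan that \emph{an odd crossing should behave like a virtual one}, is to send each even generator to the corresponding classical generator and each odd generator to the corresponding virtual generator:
\[
\phi(\sigma_{i,0}) = \sigma_i, \qquad \phi(\sigma_{i,1}) = \zeta_i .
\]

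First I would dispose of far commutativity. For $|i-j|\ge 2$ the images $\phi(\sigma_{i,\varepsilon})$ and $\phi(\sigma_{j,\eta})$ lie in $\{\sigma_i,\zeta_i\}$ and $\{\sigma_j,\zeta_j\}$ respectively, and each of the three relations $\sigma_i\sigma_j=\sigma_j\sigma_i$, $\zeta_i\zeta_j=\zeta_j\zeta_i$, $\sigma_i\zeta_j=\zeta_j\sigma_i$ holds in $Br_v^n$ for $|i-j|\ge 2$. Hence relation (\ref{far_comm_z}) is preserved in all four parity combinations, with no computation needed.

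The core of the argument is the braid relation (\ref{raid3_z}) subject to $\varepsilon+\eta+\xi\equiv 0\pmod 2$, which splits into the four cases $(\varepsilon,\eta,\xi)\in\{(0,0,0),(1,1,0),(1,0,1),(0,1,1)\}$. The case $(0,0,0)$ maps directly to the classical relation (\ref{raid3_v1}), and the case $(0,1,1)$ maps verbatim to the mixed virtual relation $\sigma_i\zeta_{i+1}\zeta_i=\zeta_{i+1}\zeta_i\sigma_{i+1}$. The remaining two cases require auxiliary identities: case $(1,0,1)$ becomes $\zeta_i\sigma_{i+1}\zeta_i=\zeta_{i+1}\sigma_i\zeta_{i+1}$, and case $(1,1,0)$ becomes $\zeta_i\zeta_{i+1}\sigma_i=\sigma_{i+1}\zeta_i\zeta_{i+1}$. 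I would derive both from the mixed relation together with $\zeta_i^2=e$: left-multiplying $\sigma_i\zeta_{i+1}\zeta_i=\zeta_{i+1}\zeta_i\sigma_{i+1}$ by $\zeta_{i+1}$ and right-multiplying by $\zeta_i$ collapses the double $\zeta$'s and yields the first identity, while left-multiplying by $\zeta_i\zeta_{i+1}$ reduces its right-hand side to $\sigma_{i+1}$ via $\zeta^2=e$, and transposing the trailing $\zeta$'s gives the second.

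The main obstacle is exactly these two derivations: one must verify that the parity-forced swap of the outer labels $\varepsilon\leftrightarrow\xi$ in (\ref{raid3_z}) is reproduced, on the virtual side, precisely by the conjugations by $\zeta$ that the mixed relation makes available. Once the four cases check out, $\phi$ respects every defining relation and therefore extends to the desired homomorphism $Br_2^n\to Br_v^n$. I would finally note that under $\phi$ an even braid (one without odd crossings) maps into the subgroup generated by the $\sigma_i$ alone, i.e. into the classical braid group sitting inside $Br_v^n$; this is the exact feature that will feed into the proof of Theorem \ref{classic_in_2}.
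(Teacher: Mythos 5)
Your proposal is correct and coincides with the paper's proof: the paper defines exactly the same map $\sigma_{i,0}\mapsto\sigma_i$, $\sigma_{i,1}\mapsto\zeta_i$ and asserts that "an elementary check" of the relations establishes well-definedness. You have simply carried out that elementary check explicitly (and correctly), including the two nontrivial cases $(1,0,1)$ and $(1,1,0)$ which do reduce to the mixed relation $\sigma_i\zeta_{i+1}\zeta_i=\zeta_{i+1}\zeta_i\sigma_{i+1}$ together with $\zeta_j^2=e$ as you describe.
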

\begin{proof}
Consider the following mapping from the group of $\mathbb{Z}_2-$braids to virtual braids:
$$\sigma_{i,0}\to\sigma_i, \quad \sigma_{i,1}\to\zeta_i.$$
In other words, every odd crossing is sent to a virtual crossing.

An elementary check shows that (due to the parity properties) the mapping is well defined: if two elements of the group $Br_2^n$ are equal, their images are also equal as the elements of the group $Br_v^n$.
\end{proof}

\begin{remark}
Note, that this mapping doesn't provide us with a homomorphic {\it inclusion} of virtual braids into $\mathbb{Z}_2-$braids. The inverse mapping, sending $\sigma_i$ to $\sigma_{i,0}$ and $\zeta_i$ to $\sigma_{i,1}$ is ``bad'' since the image of the third virtual Reidemeister moves (with all crossings being virtual) in $\mathbb{Z}_2-$braids is not an allowed move and a square of an odd generator is not trivial, while $\zeta_i^2=e$ in virtual braids.
\end{remark}

Let us now prove Theorem \ref{classic_in_2}.
\begin{proof}
Consider two classical braids $\gamma_1, \gamma_2 \in Br^n$ such that their images in $Br_2^n$ $\tilde{\gamma}_1, \tilde{\gamma}_2$ are equal. Let us show that the braids $\gamma_1, \gamma_2$ are equal as classical braids.

Since the braids $\tilde{\gamma}_1, \tilde{\gamma}_2$ are equal, they are connected by a chain of moves: second and third Reidemeister moves with certain conditions on the parity of the crossings involved. Due to Lemma \ref{virt} their images in the virtual braid group are also equal. But it is well known (see for example \cite{braids}) that if two classical braids are equal as virtual, they are equal as classical. Therefore $\gamma_1, \gamma_2$ are equal in the class of classical braids and the proof is concluded.
\end{proof}

\section{$G-$braids}
The idea of endowing a crossing with an additional structure can be generalized in the following manner. Let $G$ be an arbitrary group. Let us define the $G-$braid group.

\begin{definition}[$G-$braid group]
The $n-$strand $G-$braid group is a group with generators $\sigma_{i,g},$ $i=1,\dots, n-1, \, g \in G$ and the following set of relations:
\begin{equation}\label{far_comm_G}
\sigma_{i,g} \sigma_{j,h}=\sigma_{j,h}\sigma_{i,g}
\end{equation}
for all $|i-j|\ge 2$ and all $g, h \in G$ ({\rm far commutativity}) and
\begin{equation}\label{raid3_G}
\sigma_{i,g} \sigma_{i+1,h}\sigma_{i,w}=\sigma_{i+1,w^{-1}}\sigma_{i,h^{-1}}\sigma_{i+1,g^{-1}}
\end{equation}
for $1\le i\le n-2$ and $ghw = 1_G$; here $1_G$ means the neutral element in the group $G$ (see Fig. \ref{raid3_g}).
\end{definition}

\begin{figure}
\begin{center}
\includegraphics[scale=0.9]{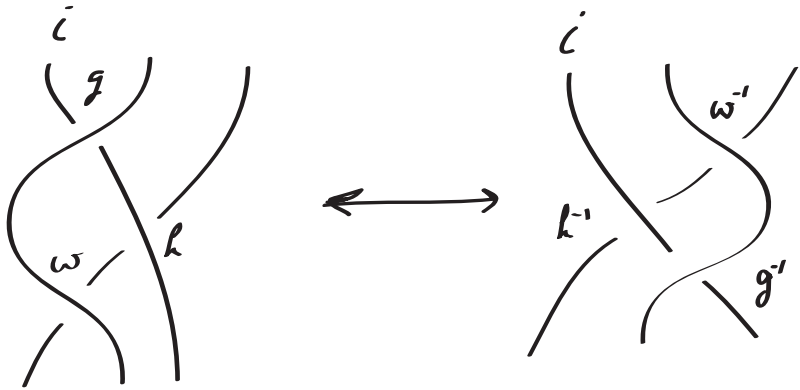}
\caption{Third Reidemeister move for a $G-$braid.}
 \label{raid3_g}
\end{center}
\end{figure}

We shall say that a braid diagram {\it admits a source--sink structure} if there exists such an orientation of its graph edges that for every four--valent vertex two opposite edges are incoming and two are outcoming. 
From the geometric point of view the $G-$braid structure means the decoration of every crossing with an element of the group $G$ with the condition that the product of the elements in every lune and triangle, written according to the source--sink structure, equals the neutral element in $G$.

\begin{example}
As an example consider the group $\mathbb{Z}_2$. In that case the inverse for any element is the element itself, so for every three labels $\varepsilon, \lambda, \nu \in \mathbb{Z}_2$ corresponding to to the crossings in the third Reidemeister move, we have $\varepsilon + \lambda + \nu \equiv 0 \mod 2$. In other words, exactly two or zero of those labels are all equal to 1.

Therefore we see, that the group $Br_{\mathbb{Z}_2}^n$ is isomorphic $Br_2^n$.
\end{example}

Note, that in case of, say, virtual braids the source--sink structure is not always admitted. Therefore the definition of $G-$braids can't be given verbatim and requires additional considerations. Nevertheless, it is possible to define analogous constructions in the virtual case, not unlike the $G-$knots and virtual $G-$knots (see, for example, \cite{gknots}). It is important to remember, though, that such object requires caution in the definition of what products of group elements, attributed to the crossings, equals the group neutral element.

\section{Dotted braids}

The parity structure, introduced in the crossings, can be reimagined as a parity on edges. This approach broadens the class of considered objects: given a diagram with a parity in the crossings one can define a diagram with a parity (dots) on the edges. The inverse procedure is not alway well defined. Nevertheless, the class of dotted diagrams such that there exists a corresponding diagram with parity, is invariant under the moves; moreover, the moves on the dotted diagrams allow the recovery of the moves on the diagrams with a parity.

\begin{definition}
The $n-$strand dotted braid group is a group with the generators $\sigma_1, \dots, \sigma_{n-1}$, generators $\gamma_1, \dots, \gamma_n$ and the following set of relations:
\begin{equation}\label{far_comm_dot}
\sigma_i \sigma_j=\sigma_j\sigma_i
\end{equation}
for all $|i-j|\ge 2$ ({\rm far commutativity}) and
\begin{equation}\label{raid3_dot}
\sigma_i \sigma_{i+1}\sigma_i=\sigma_{i+1}\sigma_i\sigma_{i+1}
\end{equation}
for $1\le i\le n-2$;
\begin{equation}\label{dot_square}
\gamma_i^2=e
\end{equation}
for all $i=1,\dots, n$,
\begin{equation}\label{dot_commute}
\gamma_i\gamma_j=\gamma_j\gamma_i
\end{equation}
for all $i,j=1,\dots, n$,
\begin{equation}\label{four_dots}
\gamma_i\gamma_{i+1}\sigma_i\gamma_i\gamma_{i+1}=\sigma_i
\end{equation}
for all $i=1,\dots, n-2$.
\end{definition}

This construction can be visualized as follows. The generators $\sigma_i$ have the same geometric meaning as before. The generators $\gamma_i$ can be represented as a trivial braid on $n$ strands with a dot on the $i-$th strand. The relation (\ref{dot_square}) means that two consecutive dots cancel each other; dots on different strands commute (see the relation (\ref{dot_commute})); finally, a crossing with four dots around equals a crossing without dots (see the relation (\ref{four_dots})).

Denote the dotted braid group with $Br_d^n$. This group is quite large. For example, one has the following

\begin{theorem}
The group $Br_2^n$ can be homomorphically included into the group $Br_d^n$.
\end{theorem}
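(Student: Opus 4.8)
The plan is to encode the parity of a crossing as a dot on one of the strands meeting at it, and then to check that the resulting assignment descends to a well-defined injective homomorphism $\phi\colon Br_2^n\to Br_d^n$. On generators I would set $\sigma_{i,0}\mapsto\sigma_i$ and $\sigma_{i,1}\mapsto\gamma_{i+1}\sigma_i$, reading the odd marking as a single dot sitting on the strand entering the crossing at position $i+1$. Using a \emph{single} dot is essential for injectivity: relation (\ref{four_dots}) gives $(\gamma_i\gamma_{i+1}\sigma_i)^2=\gamma_i\gamma_{i+1}\sigma_i\gamma_i\gamma_{i+1}\sigma_i=\sigma_i^2$, so the more symmetric assignment $\sigma_{i,1}\mapsto\gamma_i\gamma_{i+1}\sigma_i$ would force $\phi(\sigma_{i,1}^2)=\phi(\sigma_{i,0}^2)$ and identify distinct elements. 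That the elements really are distinct follows from Lemma \ref{virt}: in $Br_v^n$ one has $\zeta_i^2=e\neq\sigma_i^2$, hence $\sigma_{i,1}^2\neq\sigma_{i,0}^2$ in $Br_2^n$. The word $\gamma_{i+1}\sigma_i$ carries no such built-in cancellation.

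To see that $\phi$ is a homomorphism I would verify that each defining relation of $Br_2^n$ maps to an identity of $Br_d^n$. Far commutativity (\ref{far_comm_z}) is immediate, since dots and crossings on disjoint strands commute and distant crossings commute (\ref{far_comm_dot}). The substance is the relation (\ref{raid3_z}). Here I would first extract from (\ref{four_dots}) the two working identities $\gamma_i\gamma_{i+1}\sigma_i=\sigma_i\gamma_i\gamma_{i+1}$ (a pair of dots slides through a crossing) and $\gamma_{i+1}\sigma_i\gamma_{i+1}=\gamma_i\sigma_i\gamma_i$, and then run through the admissible parity triples $\varepsilon+\eta+\xi\equiv 0$. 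In the case $\sigma_{i,1}\sigma_{i+1,1}\sigma_{i,0}=\sigma_{i+1,0}\sigma_{i,1}\sigma_{i+1,1}$, for instance, both images reduce after one double-dot slide and one application of the Artin relation (\ref{raid3_dot}) to the common word $\gamma_{i+1}\gamma_{i+2}\sigma_{i+1}\sigma_i\sigma_{i+1}$; the two odd dots combine into the slidable pair $\gamma_{i+1}\gamma_{i+2}$ at the right moment, and the condition $\varepsilon+\eta+\xi\equiv 0$ is exactly what makes the surviving dots on the two sides agree.

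The step I expect to be the real obstacle is the remaining case, in which the two odd crossings are separated by an even one (patterns $\sigma_{i,1}\sigma_{i+1,0}\sigma_{i,1}$ and $\sigma_{i,0}\sigma_{i+1,1}\sigma_{i,1}$). There the two dots do \emph{not} pair up, and closing the relation requires carrying a lone dot past a product of two successive crossings; this manoeuvre is not visibly a consequence of (\ref{dot_square})--(\ref{four_dots}), which only control \emph{pairs} of dots. Concretely, in the kernel of $Br_d^n\to Br^n$ the identity demands that the iterated conjugate $\sigma_i\sigma_{i+1}\gamma_{i+1}\sigma_{i+1}^{-1}\sigma_i^{-1}$ be pinned down, which the pairwise relation (\ref{four_dots}) does not achieve. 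I therefore anticipate that making $\phi$ well defined will force the dot to be recorded as a label on edges, globally consistent across the whole diagram via the source--sink structure, rather than attached to a single strand of one generator; the even-sum condition should then reappear as the cocycle condition that makes such an edge labelling consistent. Pinning down this placement is, in my view, the hard part of the theorem.

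Finally, for injectivity I would argue by descent, exactly in the spirit of Theorem \ref{classic_in_2}. Given $\phi(\beta_1)=\phi(\beta_2)$, the two dotted braids are joined by a chain of the moves (\ref{raid3_dot}) and (\ref{dot_square})--(\ref{four_dots}); one reads the parity back off each dotted diagram and checks that every such move, applied between images of parity braids, can be reorganised into a chain of the moves (\ref{far_comm_z})--(\ref{raid3_z}) joining $\beta_1$ to $\beta_2$. The delicate point is that the four-dot move (\ref{four_dots}) and the dot cancellation (\ref{dot_square}) have no single counterpart among the parity moves, so one must realise their effect through admissible even-sum instances of (\ref{raid3_z}); showing that this is always possible is the crux of the injectivity half.
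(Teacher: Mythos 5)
Your choice of generator images is where the argument breaks, and you essentially concede this yourself. The paper's map sends $\sigma_{i,1}$ to $\gamma_i\sigma_i\gamma_{i+1}$ --- \emph{two} dots, one on each side of the crossing, i.e.\ on the two halves of the over-strand. Your injectivity objection only rules out the word $\gamma_i\gamma_{i+1}\sigma_i$, where both dots flank the crossing on the same side so that (\ref{four_dots}) squares it to $\sigma_i^2$; for the word $\gamma_i\sigma_i\gamma_{i+1}$ one computes $(\gamma_i\sigma_i\gamma_{i+1})^2=\gamma_i\sigma_i^2\gamma_i$, and no collapse is forced. Having discarded the two-dot assignment for the wrong reason, you fall back on a single dot, $\sigma_{i,1}\mapsto\gamma_{i+1}\sigma_i$, and then run into exactly the wall you predict: relations (\ref{dot_square})--(\ref{four_dots}) only let the \emph{pair} $\gamma_i\gamma_{i+1}$ slide through $\sigma_i$, a lone dot cannot be transported past any crossing, and in the mixed-parity instance $\sigma_{i,1}\sigma_{i+1,0}\sigma_{i,1}=\sigma_{i+1,1}\sigma_{i,0}\sigma_{i+1,1}$ of (\ref{raid3_z}) the two images differ by the position of an unpaired dot that no relation can move. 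So the map you define is not a homomorphism on all admissible parity triples, and your ``anticipated'' fix --- a globally consistent edge labelling via the source--sink structure --- is left as a wish rather than carried out; it amounts to proposing a different construction, not repairing this one.

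The single-dot choice also destroys the mechanism that actually yields injectivity. With two dots per odd crossing every image $f(\beta)$ is a \emph{good} dotted braid (even number of dots on every strand), and the parity of a crossing can be read back off a diagram as the parity of the number of dots on either half of the strands at that crossing; this reading is preserved by all moves of $Br_d^n$ applied to good braids and gives the inverse map on the image, which is what makes $f$ an inclusion. With one dot per odd crossing, strands acquire odd dot counts, the two halves at a crossing carry different parities, and no such well-defined reading exists, so your descent sketch (correct in spirit, and analogous to Theorem \ref{classic_in_2} and Lemma \ref{virt}) cannot be executed for the map you wrote down. The working proof keeps $f(\sigma_{i,0})=\sigma_i$ and $f(\sigma_{i,1})=\gamma_i\sigma_i\gamma_{i+1}$, verifies the relations by using (\ref{four_dots}) to strip the dots off any lune or triangle, applying the ordinary Reidemeister move, and restoring the dots, and then recovers the parity from the dot count on halves as above.
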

\begin{proof} Consider the following mapping $f\colon Br_2(n)\to Br_d(n):$
$$f(\sigma_{i,0})=\sigma_i, \quad f(\sigma_{i,1})=\gamma_i\sigma_i\gamma_{i+1}.$$
This mapping is well defined and is a homomorphism to a subgroup of $Br_d^n$. It is well defined due to the fact that, thanks to the (\ref{four_dots}) relation, any lune or triangle, considered in the second or the third Reidemeister move, can be transformed into a lune or triangle without dots for which a usual Reidemeister move can be used; after that the dots are returned in the same manner (see Fig. \ref{o2}, \ref{o3}). Here we extensively use the structure of the group $Br_2(n)$.

\begin{figure}
\begin{center}
\includegraphics[scale=0.8]{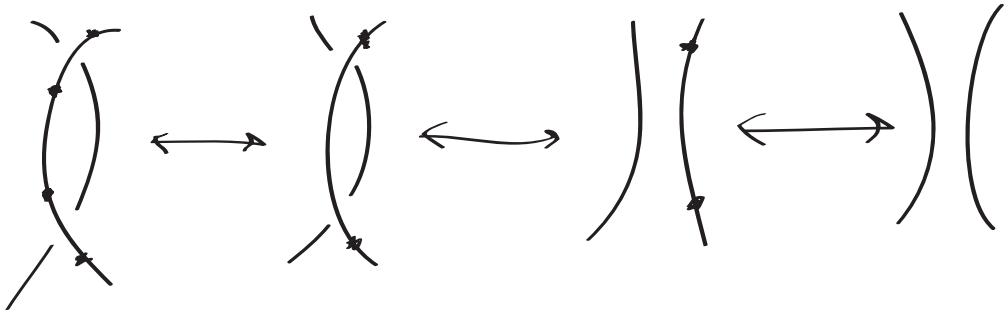}
\caption{The second Reidemeister move for the images of two odd crossings.}
 \label{o2}
\end{center}
\end{figure}

Now we need to introduce the notion of {\it good} dotted braids.

\begin{definition}
A dotted braid is called {\it good} if its every strand has even number of dots.
\end{definition}

Now we can define the inverse homomorphism: from good dotted braids to braids with parity. Consider a good braid $\gamma \in Br_d^n$ and its arbitrary diagram. We present a well defined way to assign a parity to every crossing of the diagram.

\begin{figure}
\begin{center}
\includegraphics[scale=0.9]{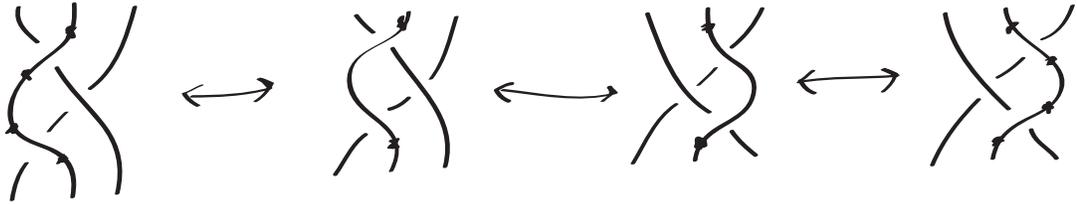}
\caption{The third Reidemeister move for the images of two odd crossings.}
 \label{o3}
\end{center}
\end{figure}

Every crossings breaks two strands into four ``half--strands''. A {\it half} is two right (or two left) half--strands together. Note, that since the braid is good, the parity of the number of dots on either halves is the same. That is the parity we attribute to the crossing considered. It is easily verified, that this mapping is well defined.

That completes the proof.\end{proof}

Geometrically speaking, we send odd crossings to the crossings with two dots on the different halves of the upper strand. Note, that such parity is a global property: a dot can be ``taken far away'' via the dot moves, but the crossing remains odd. That is exactly the difference between dotted braids and, say, $G-$braids: in the latter the additional information is positioned in the crossings hence local.

The following important fact holds true:

\begin{theorem}
If two $\mathbb{Z}_2-$braids are equal as dotted braids, they are equal as $\mathbb{Z}_2-$braids.
\end{theorem}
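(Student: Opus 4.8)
The plan is to prove that the homomorphism $f\colon Br_2^n\to Br_d^n$ constructed in the previous theorem is injective. This is exactly the statement required: two $\mathbb{Z}_2$-braids $x,y$ are \emph{equal as dotted braids} precisely when $f(x)=f(y)$ in $Br_d^n$, and the conclusion that they are \emph{equal as $\mathbb{Z}_2$-braids} means $x=y$ in $Br_2^n$.

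First I would check that the image of $f$ lies inside the subgroup of good dotted braids. It suffices to look at the generators: $f(\sigma_{i,0})=\sigma_i$ carries no dots, and in $f(\sigma_{i,1})=\gamma_i\sigma_i\gamma_{i+1}$ both dots sit on the single strand running from the top position $i$ to the bottom position $i+1$, so each strand acquires an even number of dots. Since the dot-parity of each strand is additive under stacking of diagrams, the good dotted braids form a subgroup, and hence all of $\mathrm{Im}\,f$ is good.

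Next I would use the map $g$ from good dotted braids to $Br_2^n$ built in the proof of the previous theorem, which reads off the common dot-parity of the two halves of each crossing. The core of the argument is the retraction identity $g\circ f=\mathrm{id}_{Br_2^n}$, verified on generators. For $\sigma_{i,0}$ the crossing of $f(\sigma_{i,0})=\sigma_i$ has no dots on either half, so it is even and $g(f(\sigma_{i,0}))=\sigma_{i,0}$. For $\sigma_{i,1}$ the dot $\gamma_i$ lies on the left half and the dot $\gamma_{i+1}$ on the right half of the crossing of $f(\sigma_{i,1})=\gamma_i\sigma_i\gamma_{i+1}$; both halves are therefore odd, the crossing is odd, and $g(f(\sigma_{i,1}))=\sigma_{i,1}$. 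Thus $g\circ f$ fixes each generator and so equals the identity.

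From $g\circ f=\mathrm{id}$ injectivity is immediate: if $f(x)=f(y)$, then both are good, $g$ applies to them, and $x=g(f(x))=g(f(y))=y$. I expect no genuine obstacle internal to this proof. The delicate point --- that $g$ is well defined on good dotted braids, i.e.\ independent of the chosen diagram and invariant under the dotted-braid relations (\ref{dot_square})--(\ref{four_dots}) --- has already been settled in the previous theorem and is assumed here; after that, only the routine generator-level computation of $g\circ f$ remains.
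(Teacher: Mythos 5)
Your proposal is correct and follows essentially the same route as the paper: both arguments rest on the fact that the image of $f$ consists of good dotted braids, that goodness and the induced crossing parity are preserved under the dotted-braid moves (the well-definedness of the map $g$ established in the preceding theorem), and that $g$ inverts $f$ on the relevant subgroup. You merely package the paper's move-by-move tracking more cleanly as the retraction identity $g\circ f=\mathrm{id}$, verified on generators, which is a welcome clarification rather than a different method.
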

\begin{proof}
Consider two $\mathbb{Z}_2-$braids $\Gamma_1, \Gamma_2$ equal as dotted braids: their images under the mapping $f$, defined above, can be connected by a chain of moves allowed for the group $Br_d^n$.

The braid $f(\Gamma_1)$ is a good dotted braid. As was seen above, dotted braid moves on good braids don't change the parity of its crossings. Therefore, at each step of the transformation the braid remains good hence --- a braid with parity. Moreover, the $\mathbb{Z}_2-$images are also connected by the corresponding move. Therefore, the theorem is proved.
\end{proof}

One can use the dot idea to get different interesting groups. Let us present another example.

In virtual braids, unlike the classical ones, there is a torsion. To approach realisation of virtual braids with classical ones with an additional structure we introduce the notion of {\it twisted dotted braid group}:

\begin{definition}
The $n-$strand twisted dotted braid group is a group with the generators $\sigma_1, \dots, \sigma_{n-1}$, generators $\gamma_1, \dots, \gamma_n$ and the following set of relations:
\begin{equation}\label{far_comm_dot}
\sigma_i \sigma_j=\sigma_j\sigma_i
\end{equation}
for all $|i-j|\ge 2$ ({\rm far commutativity}) and
\begin{equation}\label{raid3_dot}
\sigma_i \sigma_{i+1}\sigma_i=\sigma_{i+1}\sigma_i\sigma_{i+1}
\end{equation}
for $1\le i\le n-2$;
\begin{equation}\label{dot_square'}
\gamma_i^2=e
\end{equation}
for all $i=1,\dots, n$,
\begin{equation}\label{dot_commute'}
\gamma_i\gamma_j=\gamma_j\gamma_i
\end{equation}
for all $i,j=1,\dots, n$,
\begin{equation}\label{four_dots'}
\gamma_i\gamma_{i+1}\sigma_i\gamma_i\gamma_{i+1}=\sigma_i^{-1}
\end{equation}
for all $i=1,\dots, n-2$.
\end{definition}

This group differs from the dotted braid group only in the last relation: four dots not preserve but invert a crossing. This group will be denoted with $\widetilde{Br}_{d}^n$. The following is true:

\begin{theorem}
The group $Br_2^n/\langle\sigma_{i,1}=\sigma_{i,1}^{-1} \rangle$ homomorphically includes into the group $\widetilde{Br}_{d}^n$.
\end{theorem}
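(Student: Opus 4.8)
The plan is to reuse the correspondence from the untwisted case and to show that the twist forces precisely the relation by which we quotient. Define $f$ on generators by $f(\sigma_{i,0})=\sigma_i$ and $f(\sigma_{i,1})=\gamma_i\sigma_i\gamma_{i+1}$, and verify that it descends to a homomorphism on $Br_2^n/\langle\sigma_{i,1}=\sigma_{i,1}^{-1}\rangle$. The verification of far commutativity and of the parity third Reidemeister relations (\ref{raid3_z}) would proceed exactly as in the proof that $Br_2^n$ is homomorphically included in $Br_d^n$: using $\gamma_i^2=e$, the commutation of dots with one another and with crossings on disjoint strands, together with the four-dot relation, each lune or triangle is first cleared of dots, the ordinary Reidemeister relation of $\widetilde{Br}_d^n$ is applied, and the dots are then restored.

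The genuinely new point is the quotient relation $\sigma_{i,1}^2=e$. Writing $c_i=\gamma_i\gamma_{i+1}$, the relations $\gamma_i^2=\gamma_{i+1}^2=e$ and $\gamma_i\gamma_{i+1}=\gamma_{i+1}\gamma_i$ give $c_i^2=e$, while the twisted four-dot relation (\ref{four_dots'}) reads $c_i\sigma_ic_i=\sigma_i^{-1}$, whence $\sigma_ic_i\sigma_i=c_i$. Therefore
\[
f(\sigma_{i,1})^2=\gamma_i\sigma_i\gamma_{i+1}\gamma_i\sigma_i\gamma_{i+1}=\gamma_i(\sigma_ic_i\sigma_i)\gamma_{i+1}=\gamma_ic_i\gamma_{i+1}=e.
\]
Thus $f(\sigma_{i,1})$ is an involution in $\widetilde{Br}_d^n$, so $f$ annihilates $\sigma_{i,1}^2$ and is well defined on the quotient. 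This is exactly the feature absent in the untwisted group, where the analogous element has infinite order, and it explains why the quotient, rather than $Br_2^n$ itself, is the correct source.

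To obtain the inclusion I would construct a one-sided inverse, mirroring the passage from good dotted braids to braids with parity. One first checks that $f$ lands among the \emph{good} twisted dotted braids: following each strand through $f(\sigma_{i,1})=\gamma_i\sigma_i\gamma_{i+1}$, one of the two strands acquires two dots and the other none, so every strand carries an even number of dots. On good twisted dotted braids one then defines $g$ by assigning to each crossing the parity of the number of dots on one of its halves (well defined, since the two halves agree modulo $2$ for good braids), with values in $Br_2^n/\langle\sigma_{i,1}=\sigma_{i,1}^{-1}\rangle$. A direct computation gives $g(f(\sigma_{i,0}))=\sigma_{i,0}$ and $g(f(\sigma_{i,1}))=\sigma_{i,1}$, so that $g\circ f=\mathrm{id}$ and $f$ is injective.

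The hard part will be the consistency of $g$, and it is precisely here that the twist interacts with the quotient. Every application of the twisted relation (\ref{four_dots'}) both moves four dots and replaces a crossing by its inverse; under $g$ the dot count of each half changes by an even number, so the assigned parity is unchanged, while the crossing is sent to $\sigma_{i,\varepsilon}^{-1}$. When $\varepsilon=0$ this is harmless, but when $\varepsilon=1$ it yields $\sigma_{i,1}^{-1}$ in place of $\sigma_{i,1}$, and the two agree only after passing to the quotient. Verifying that this is the sole discrepancy --- that once $\sigma_{i,1}=\sigma_{i,1}^{-1}$ is imposed the parity assignment is invariant under all moves of $\widetilde{Br}_d^n$, and in particular that the sign flips produced while clearing and restoring dots in the $R_3$ check cancel against it --- is the crux of the argument and the step demanding the most care.
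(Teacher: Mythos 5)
Your proposal takes essentially the same route as the paper: the same map $f$, the third-Reidemeister check carried over from the untwisted case, and the one genuinely new verification that $f(\sigma_{i,1})^2=e$ via the twisting relation (\ref{four_dots'}) --- your computation with $c_i=\gamma_i\gamma_{i+1}$, using $c_i^2=e$ and $\sigma_ic_i\sigma_i=c_i$, is precisely the algebraic form of the lune argument the paper gives by picture. You are in fact more explicit than the paper, which stops after well-definedness and leaves the inverse map on good twisted dotted braids, and the interaction of the crossing inversion with the quotient relation, implicit.
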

\begin{proof}
Consider the mapping, which works on the generators exactly like the above--defined mapping $f$. Let us show that it is well defined. The triangle (third Reidemeister move) case is considered exactly as before. But here we need to prove that the following equality holds:
$$f(\sigma_{i,1})f(\sigma_{i,1})=e.$$
In other words, a lune with two odd crossings of the same structure can be acted upon with the second Reidemeister move. That is true due to the ``twisting'' relation (\ref{four_dots'}), see Fig. \ref{o2'}. The theorem is proved.
\end{proof}

\begin{figure}
\begin{center}
\includegraphics[scale=0.9]{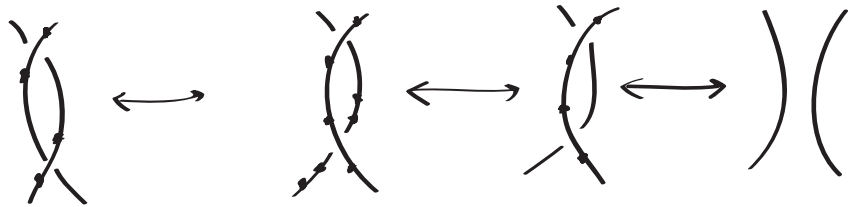}
\caption{The second Reidemeister move for the images of two odd crossings with the same structure.}
 \label{o2'}
\end{center}
\end{figure}

\begin{remark}
The group $Br_2^n/\langle\sigma_{i,1}=\sigma_{i,1}^{-1} \rangle$ ``almost'' realises the virtual braid group by sending even crossings to classical and odd to virtual. The only move absent in this group is the third Reidemeister move with three virtual crossings.
\end{remark}

\section{Further developments and some open problems}

Let us conclude the paper with a list of some open problems in braid theory, closely related to the above--defined ideas and structures.

\begin{enumerate}
\item The objects with dots, odd crossings and other such structures (knots, links, etc.) admit natural operations of {\it projection, covering, bracket, cobracket (delta)} (see, say, \cite{reidmoves}). For example, the knots whose vertices are naturally marked by elements from $\mathbb{Z}_{2}$ can be naturally covered with two component links: even crossings correspond to selfcrossings of a component and the odd ones --- to the crossings of different link components.

A question arises: is there an inclusion of the above--defined groups ($Br_2^n$, $Br_d^n$ and others, as well as their analogs) into the classical braid group $Br^N$ for a sufficiently large number of strands $N$? If that is the case, the problem of conjugation can be solved, exact linear presentations of virtual braid groups can be constructed.

\item An approach similar to free knot/free braid approach can be used when studying $G_{n}^{k}$
braids for arbitrary $k$.The structures not unlike the ones presented in the paper may be introduced to $G_n^2-$tangles and $G_n^3-$tangles. A natural problem is to explore the properties of $G_n^2-$tangles and $G_n^3-$tangles with parity, dots, $G$ group structure. That can help to understand new properties and construct new invariants for different well--known objects, for example, classical knots.
\end{enumerate}

\section*{Acknowledgments}
The authors are partially
supported by Laboratory of Quantum Topology of Chelyabinsk State
University (Russian Federation government grant 14.Z50.31.0020), by
RF President NSh --- 1410.2012.1,
 and
by grants of the Russian Foundation for Basic Research,
13-01-00830, 14-01-91161, 14-01-31288.
Zhiyun Cheng is supported by NSFC 11301028 and the Fundamental Research Funds for Central Universities of China 105-105580GK.



\begin{thebibliography}{0}
\bibitem{parity}{Manturov, V.O., Parity in knot theory. // Sbornik Mathematics, 201(5), pp. 693--733. DOI 10.1070/SM2010v201n05ABEH004089.}

\bibitem{reidmoves}{Manturov V.O., Reidemeister moves and groups. // e-print arXiv:1412.8691.}

\bibitem{bracket}{Manturov V.O., One--Term Parity Bracket For Braids. // e-print arXiv:1501.00580.}

\bibitem{braids}{Manturov V.O., An elementary proof that classical braids embed into virtual braids. // e-print arXiv:1504.03127.}

\bibitem{nonreid}{V.O.Manturov, Non--Reidemeister Knot Theory and Its Applications in Dynamical Systems, Geometry, and Topology, e-print arXiv:1501.05208}

\bibitem{gknots}{Fedoseev D.A., Manturov V.O., Invariants of homotopic classes of curves and graphs on 2--surfaces // Fund. and Appl. Mathem., 2013, v. 18, issue 4, pp. 89-105.}

\bibitem{imn}{Ilyutko D.P., Manturov V.O., Nikonov I.M., Parity in Knot Theory and Graph-Links // CMFD, 41 (2011), pp. 3--163}

\bibitem{fenn}{Fenn R.A., Rimanyi P. and Rourke C.P., The Braid-Permutation Group // Topology, vol. 36, \No. 1, pp. 123--135, 1997}
\end{thebibliography}
\end{document}